\newtheorem{defn}{Definition}
\newtheorem{thm}{Theorem}
\newtheorem{lemma}{Lemma}
\newtheorem{cor}{Corollary}
\newcommand{\ev}{\mathcal{E}}
\newcommand{\Rd}{\mathbb{R}^d}
\newcommand{\knl}{\mathfrak{K}}
\newcommand{\spand}[1]{\text{span}\{\mathfrak{K}(\cdot,x):x\in #1\}}
\newcommand{\spanc}[1]{\overline{\text{span}\{\mathfrak{K}(\cdot,x):x\in #1\}}}
\newcommand{\Ho}{H_{\Omega}}
\newcommand{\Hn}{H_{\Omega_n}}
\newcommand{\Restr}[1]{\mathbf{R}_{#1}}
\newcommand{\Exten}[1]{\mathbf{E}_{#1}}
\newcommand{\wsub}[1]{_{W^{#1,2}(\Omega)}}
\title{\bf
Approximations of the Reproducing Kernel Hilbert Space (RKHS) Embedding Method over Manifolds
}
\author{Jia Guo$^{1}$, Sai Tej Paruchuri$^{1}$ and Andrew J. Kurdila$^{1}$
\thanks{$^{1}$Jia Guo, Sai Tej Paruchuri and Andrew J. Kurdila are with the Department of Mechanical Engineering, Virginia Tech, Blacksburg VA 24060, USA
        {\tt\small jguo18@vt.edu, saitejp@vt.edu, kurdila@vt.edu}}}
\begin{document}

\maketitle
\thispagestyle{empty}
\pagestyle{empty}

\begin{abstract}
The reproducing kernel Hilbert space (RKHS) embedding method is a recently introduced estimation approach that  seeks to identify  the unknown or uncertain function in the governing equations of a nonlinear set of ordinary differential equations (ODEs). While the original state estimate evolves in Euclidean space, the  function estimate is constructed in an infinite dimensional RKHS that must be approximated in practice. When  a  finite dimensional approximation is constructed using a basis defined in terms of  shifted kernel functions centered at the observations along a trajectory, the RKHS embedding method can be understood as a  data-driven approach. This paper derives sufficient conditions that ensure that  approximations of the unknown function converge in  a Sobolev norm over a submanifold that supports the dynamics. Moreover, the rate of convergence for the finite dimensional approximations is derived in terms of the fill distance of the samples in the embedded manifold.  A  numerical simulation of an example problem  is carried out to illustrate the qualitative nature of convergence results derived in the paper.

\end{abstract}

%
%
\section{Introduction}
\label{sec:intro} 
Data-driven modeling of uncertain or unknown nonlinear dynamic systems has been a topic of great interest over the past few years. This field synthesises  methods from dynamical system theory, estimation and control theory, approximation theory, and operator theory  and uses observations of states or observables to estimate quantities associated with an  unknown dynamic system \cite{klus2018,pan2018,liu2017}. The collection of algorithms that can, in some sense, be viewed as data-dependent methods is vast. Specific examples include the following: the collection of studies on the extended dynamic mode decomposition (EDMD) algorithm and its variants that are based on Koopman theory \cite{giannakis2019,drmac2018,macesic2018,bollt2018,li2017,korda2018,bruntonbook}; adaptive basis methods for online adaptive estimation \cite{pofar,zhao2007self,Chen2016Observer};  fuzzy control methods  based on neural networks\cite{san2019artificial,Chen2016Adaptive}; and strategies from distribution-free learning theory and nonlinear regression \cite{gyorfi}. 

Recently the authors have introduced a novel approach, the RKHS embedding method in \cite{Bo1,Bo2,Bo3,kl2013},  for the estimation of uncertain systems. This method likewise can be viewed as a type of  data-dependent algorithm when bases of approximation are selected along the trajectory of an unknown system. The RKHS embedding method generalizes  estimators used in conventional adaptive estimation over finite dimensional state spaces. The approach  essentially lifts the learning law of the estimation scheme  to an infinite dimensional RKHS $H$ of real-valued functions defined over the  state space. The unknown function $f(\cdot)$ that characterizes the uncertainty in the ordinary differential equations (ODEs) of dynamical system of is assumed to be an element of the RKHS $H$. The resulting overall estimator is thereby defined for both the states and the unknown function,  and it defines an  evolution in $\Rd\times H$. Since the evaluation functional $\ev_x$ is linear and bounded in the RKHS $H$, the unknown nonlinearity defined by $x\mapsto f(x)$ in the original ODE can be expressed as $\ev_x f$ in the RKHS embedding formulation, which is essentially a linear operator acting on the function $f\in H$. In this way, the nonlinearity in the original ODEs is avoided. The trade-off is that one has to conduct analysis in the infinite dimensional spaces, which is usually (much) more complicated.
   
Those familiar with the general philosophy  of Koopman theory will recognize a qualitative similarity between the advantages of the RKHS embedding method and approaches based on  Koopman theory. Both seek to replace a nonlinear system with one that is linear, and both must address the issues regarding convergence of approximations in coordinate realizations.
One significant difference might be that the RKHS method defines an estimator in \textit{continuous time} via an infinite dimensional distributed parameter system (DPS). As discussed in detail in \cite{kurdila2018koopman}, it is possible to further relate the two methods by showing that particular entries of the operator matrix that defines the DPS in RKHS embedding can be identified with some types of Koopman operators. 

In a way that is analogous to the conventional adaptive estimation in finite dimensional spaces, the convergence of the RKHS embedded estimator can be guaranteed with the satisfaction of a  condition of persistence of excitation (PE). The notion of the PE condition for the RKHS embedding method has been introduced and studied  in the authors' previous work \cite{gpk2019,kgp2019}. Given a subset $\Omega\subseteq \Rd$, a necessary condition for $\Omega$ to be PE by a positive orbit $\Gamma^+(x_0)$ starting at $x_0$  is that all the neighborhoods of  points in $\Omega$ are  ``visited infinitely often'' by the  trajectory. This means that $\Omega$ must be a subset of the $\omega$-limit set, in which every point is the limit of a subsequence of points extracted from the trajectory \cite{kgp2019}. A convergence result in \cite{kgp2019} states that over the indexing set $\Omega$ that is persistently excited, the estimate of the  unknown function converges to the actual function \cite{gpk2019}. However, the RKHS estimate generated by the DPS lies in the infinite dimensional RKHS. In order to obtain estimates that can be computed in practice, a finite dimensional approximation of the RKHS embedded equations has to be implemented. This paper studies the approximation of the adaptive RKHS embedded estimator. 

Most of time, a discussion about the convergence of approximations in an RKHS can be transformed into a discussion about the operator $I-\mathbf{P}_{\Omega_n}$, where $\mathbf{P}_{\Omega_n}:H_X\rightarrow \Hn$ denotes the orthogonal projection onto the finite dimensional approximant subspace $\Hn$. Additional insight, or sometimes a finer analysis, can be obtained by interpreting this projection error in other well-known spaces.
It is known that many commonly used RKHS are either embedded in or equivalent to some Sobolev space $W^{\tau,2}(\mathbb{R}^d)$. The fact that the family of Sobolev spaces provides a refined characterization of the smoothness of functions is useful for estimating the approximation error. In this paper, we first review carefully the relationship between some types of RKHS and Sobolev spaces. Then the error equations for some type of RKHS embedded estimator are recast in Sobolev spaces to facilitate the error analysis. Using the recently introduced results on the Sobolev error bounds for the interpolation operator \cite{Fu,HNW2010,HNW2011}, the rate of convergence for the finite dimensional approximation of the RKHS embedding equations is derived in terms of the fill distance of the samples $\Omega_n$ with respect to the subset (or manifold) $\Omega$. The most succinct form of this new error bound states that the error decays like $\mathcal{O}(h_{\Omega_n,\Omega}^{s-\mu})$ where $s$ and $\mu$ are the smoothness indices that depend on the choice of reproducing kernel and Sobolev spaces.

\subsection{Notations}
In this paper, the real-valued, symmetric, positive definite kernels over $X\times X$ are denoted by $\knl(\cdot,\cdot)$. The kernel basis function centered at $x\in X$ is denoted by $\knl_x(\cdot):=\knl(x,\cdot)$. The RKHS induced by $\knl$ is denoted by $H_X$. By $W^{s,2}(\Omega)$ we denote the Sobolev space defined on the domain $\Omega$. The restriction operator is denoted by $\Restr{\Omega}$, which restricts the domain of a  function to the subset $\Omega$. The space $\Restr{\Omega}H_X$ denotes the restriction to $\Omega$ of all of the functions $f\in H_X$.

%
%
\section{Problem Setup}
\label{sec:rkh_embedding}
%
%
\subsection{Reproducing Kernel Hilbert Spaces}
A real RKHS is a Hilbert space $H_X$ of real-valued functions defined over $X$ that admits a reproducing kernel $\knl:X\times X\rightarrow \mathbb{R}$. The kernel $\knl(\cdot,\cdot)$ has reproducing property provided for all $x\in X$ and $f\in H_X$, $f(x) = (f,\knl_x)_{H_X}$. The RKHS $H_X$ is the closure of all the linear spans of kernel basis functions centered at $x\in X$,
\begin{equation*}
    H_X = \spanc{X}.
\end{equation*}
Here the set of kernel centers $X$ is called the index set of $H_X$. If $\Omega\subseteq X$ is a subset, then $\Ho = \spanc{\Omega}$ is also an RKHS. Moreover, $\Ho$ is a subspace of $H_X$. The whole space $H_X$ can be expressed as the direct sum $H_X = \Ho \oplus V_\Omega$. Since any function $\phi\in V_\Omega$ is perpendicular to the space $\Ho$, the following condition is true
\begin{equation}
\label{eq:directSum}
    (\phi,\knl_x)_{H_X} = \phi(x) = 0 \quad \text{for all }x\in\Omega.
\end{equation}
For $\phi\in H_X$ to be in $V_\Omega$, it is necessary that $\phi(x)=0$ for all $x\in\Omega$. In fact, this condition is also sufficient \cite{Ber}. 

The condition in Eq. \ref{eq:directSum} is particularly useful for computing the projection $\mathbf{P}_\Omega f$ for $f\in H_X$. Since $(I-\mathbf{P}_\Omega)f\in V_\Omega$, it is sufficient and necessary to have for all $x\in\Omega$,
\begin{equation*}
    ((I-\mathbf{P}_\Omega)f,\knl_x)_{H_X} = f(x) - (\mathbf{P}_\Omega f)(x) = 0.
\end{equation*}
Thus for any \textit{finite discrete} set $\Omega_n$, the projection operation is in fact equivalent to the interpolation operation, $\mathbf{P}_{\Omega_n}\equiv I_{\Omega_n}:H_X\rightarrow \Hn$.

In this paper, we only consider the RKHS uniformly embedded in the space of continuous functions $C(X)$. As a result, if we let $\ev_x:f\mapsto f(x)$ denote the evaluation operator on $H_X$, then it can be deduced that $\ev_x$ is a bounded linear operator. One sufficient condition for $H_X$ to be continuously embedded in $C(X)$ is that there exists a constant $\bar{k}$ such that $\sqrt{\knl(x,x)}\leq \bar{k}<\infty$ for all $x\in X$. In fact, by the Cauchy-Schwartz inequality we have
{
\small
\begin{equation*}
    |\ev_x f| = |(f,\knl_x)_{H_X}| \leq \|f\|_{H_X}\|\knl_x\|_{H_X} = \|f\|_{H_X}\sqrt{\knl(x,x)}.
\end{equation*}
}
If the above constant $\bar{k}$ exists, then we have
{
\small
\begin{equation*}
    \|f\|_{C(X)} = \sup_{x\in X}|\ev_x f|\leq \|f\|_{H_X}\sup_{x\in X}\sqrt{\knl(x,x)}\leq \bar{k}\|f\|_{H_X},
\end{equation*}
}
which implies $H_X\hookrightarrow C(X)$. In all the following discussions, we assume such constant $\bar{k}$ always exists.

In some cases it is useful to consider the space of restrictions $\Restr{\Omega}H_X$. Clearly, the restriction operator $\Restr{\Omega}:H_X\rightarrow\Restr{\Omega}H_X$ is linear and onto. It follows from Eq. \ref{eq:directSum} that $\Restr{\Omega}V_\Omega = \{0\}$. In fact, one can show that $\Restr{\Omega}$ is a bijection over the subspace $\Ho$. To see why this is true, suppose $f_1,f_2\in\Ho$ are different functions. If $\Restr{\Omega} f_1 - \Restr{\Omega} f_2 = \Restr{\Omega}(f_1-f_2) = 0$, then $f_1-f_2\in V_\Omega$ because $(f_1-f_2,\knl_x)_{H_X} = 0$ for all $x\in\Omega$. But $f_1-f_2\in\Ho$ because $\Ho$ is a subspace, so $f_1-f_2=0$, which contradicts the assumption. With $\Restr{\Omega}|_{\Ho}$ being a bijection, the inverse operator $(\Restr{\Omega}|_{H_\Omega})^{-1}:\Restr{\Omega}H_X \rightarrow \Ho$ is well defined. We denote the inverse as the extension operator $\Exten{\Omega}:=(\Restr{\Omega}|_{\Ho})^{-1}$. It follows that
\begin{equation}
    \label{eq:P_ER}
    \mathbf{P}_{\Omega} = \Exten{\Omega}\Restr{\Omega}:H_X\rightarrow \Ho.
\end{equation}
By defining the inner product in $\Restr{\Omega}H_X$ as $(f,g)_{\Restr{\Omega}H_X}:=(\Exten{\Omega}f,\Exten{\Omega}g)_{H_X}$, we can show that $\Restr{\Omega}H_X$ is an RKHS. The associated reproducing kernel is shown to be the restriction of the reproducing kernel $\knl(\cdot,\cdot)$ over $\Omega\times\Omega$, that is, $\Restr{\Omega}\knl:=\knl|_{\Omega\times\Omega}$ \cite{Fu}.

%
%
\subsection{RKHS embedded estimator}

We assume the governing equation of a partially unknown dynamic system can be written as follows
\begin{equation}
    \dot{x}(t) = Ax(t) + Bf(x(t)), \label{eq:orig_dyn}
\end{equation}
where $A\in\mathbb{R}^{d\times d}$ is a Hurwitz matrix, $B\in\mathbb{R}^{d\times 1}$, and $f:\Rd\rightarrow\mathbb{R}$ is the unknown nonlinear function. As discussed more fully in \cite{gpk2019}, this equation suffices for the proofs that follow, and more general governing equations can be treated as discussed in this reference. The governing equations of the RKHS embedded estimator contain one equation for the state estimates and another for the function estimates,
\begin{equation}
\label{eq:dyn_inf}
\begin{aligned}
    \dot{\hat{x}}(t) &= A\hat{x}(t) + B\ev_{x(t)}\hat{f}(t), \\
    \dot{\hat{f}}(t) &= \gamma (B\ev_{x(t)})^*P(x(t) - \hat{x}(t)),
\end{aligned}
\end{equation}
where $\hat{f}(t)\in H_X$ is the time-varying function estimate in the RKHS, $\ev_{x(t)}$ denotes the evaluation operator, and $P$ is the solution to the Lyapunov equation associated to the Hurwitz matrix $A$. That is, $P$ satisfies $A^TP+PA=-Q$ for some positive definite $Q\in\mathbb{R}^{d\times d}$. Assuming $f\in H_X$, we can replace the nonlinear term $f(x(t))$ with $\ev_{x(t)}f$ in Eq. \ref{eq:orig_dyn}. Denote the estimation errors by $\tilde{x}(t) = x(t) - \hat{x}(t)$ and $\tilde{f}(t) = f - \hat{f}(t)$. The estimation error equations can be written as follows,
\begin{equation}
\label{eq:dyn_inferr} 
\begin{aligned}
    \dot{\tilde{x}}(t) &= A\tilde{x}(t) + B\ev_{x(t)}\tilde{f}(t), \\
    \dot{\tilde{f}}(t) &= -\gamma (B\ev_{x(t)})^*P\tilde{x}(t),
\end{aligned}
\end{equation}
which resembles its counterpart in conventional adaptive estimation, but evolves in infinite dimensional space $\Rd\times H_X$. It has been proven that the equilibrium the system expressed in Eq. \ref{eq:dyn_inferr} at the origin is uniformly asymptotically stable (UAS) if the PE condition is satisfied. See \cite{gpk2019,kgp2019} for discussion about the PE condition and the stability of the error dynamics. 

Practical implementations of the RKHS embedded estimator require the construction of approximations in finite dimensional subspaces $\Rd\times H_{\Omega_n}$. We denote the states of the finite-dimensional estimator with $(\hat{x}_n(t),\hat{f}_n(t))$. The approximation $\hat{f}_n(t)$ lies in the approximant space $H_{\Omega_n}$ defined as 
\begin{equation*}
    H_{\Omega_n} = \spand{\Omega_n} \subseteq H_X
\end{equation*}
for some finite collections of distinct centers $\Omega_n = \{\xi_i\}_{i=1}^n$. In this paper we explore the case when the centers are taken from the positive orbit $\Gamma^+(x_0) = \bigcup_{t\geq 0}x(t)$ of the uncertain system. This choice makes the approach a data-driven method. The evolution equations of the finite dimensional estimator are written as 
\begin{equation}
\label{eq:dyn_approx}
    \begin{aligned}
    \dot{\hat{x}}_n(t) &= A\hat{x}_n(t) + B\ev_{x(t)}\mathbf{P}_{\Omega_n}^*\hat{f}_n(t), \\
    \dot{\hat{f}}_n(t) &= \gamma \mathbf{P}_{\Omega_n} (B\ev_{x(t)})^*P(x(t) - \hat{x}_n(t)).
    \end{aligned}
\end{equation}
We call the difference $\tilde{f}_n(t)=f - \hat{f}_n(t)$ the total error of the function estimate. It is the summation of two parts, the ``infinite dimensional'' error $\tilde{f}(t) = f - \hat{f}(t)$ and the finite dimensional approximation error $\bar{f}(t) = \hat{f}(t) - \hat{f}_n(t)$. The corresponding notations $\tilde{x}(t)$ and $\bar{x}(t)$ are defined accordingly for the state estimate. From Eq. \ref{eq:orig_dyn}, \ref{eq:dyn_inf} and \ref{eq:dyn_approx}, we can derive the dynamical equations as follows that characterize the evolution of the approximation error,
\begin{align}
    \dot{\bar{x}}_n(t) &= A\bar{x}_n(t) + B\ev_{x(t)}\bar{f}_n(t), \label{eq:err_xbar}\\
    \dot{\bar{f}}_n(t) &= -\gamma\mathbf{P}_{\Omega_n}(B\ev_{x(t)})^*P\bar{x}_n(t) \nonumber \\
        &\hspace{4em} + \gamma(I-\mathbf{P}_{\Omega_n})(B\ev_{x(t)})^*P\tilde{x}(t). \label{eq:err_fbar}
\end{align}

%
%
\subsection{Sobolev Spaces}
For a subset $\Omega\subseteq X = \Rd$ and a positive integer $s$, the Sobolev space $W^{s,2}(\Omega)$ is the collection of functions in $L^2(\Omega)$ that have weak derivatives of all orders less than or equal to $s$ that are also in $L^2(\Omega)$. The space $W^{s,2}(\Omega)$ is equipped with the norm
\begin{equation}
    \|f\|\wsub{s}^2 = \sum_{0\leq |\alpha| \leq s} \int_\Omega \left\vert \frac{\partial^\alpha f}{\partial x^\alpha} \right\vert^2 d\mu,
\end{equation}
where $\alpha = (\alpha_1,...,\alpha_d)$ denotes the multi-indices of derivative with $\sum_{i=1}^d \alpha_i = |\alpha|$. The integral is taken with respect to the Lebesgue measure. The Sobolev spaces for non-integer $t>0$ are defined in terms of interpolation space theory. Similar definitions hold for Sobolev spaces $W^{s,2}(\Omega)$ over a manifold $\Omega$, with the derivatives replaced by covariant intrinsic derivatives on $\Omega$ and $\mu$ the volume measure of the manifold. See \cite{Adams} for details.

As discussed above, we must consider the restrictions of functions to subsets. One particularly important case needed in this paper is when $\Omega\subseteq X$ is a $k$-dimensional smooth compact embedded manifold. By trace theorem (Proposition 2 in \cite{Fu}), if $\tau>(d-k)/2$, then we have 
\begin{equation}
\label{eq:RWX_WO}
    \Restr{\Omega}W^{\tau,2}(X) \approx W^{\tau-(d-k)/2,2}(\Omega)
\end{equation}
with equivalent norms. The important implication in the above equivalence is that an amount of ``smoothness'' is lost due to the restriction operation onto the low-dimensional smooth submanifold, which will affect the convergence rates in this paper.

Many Sobolev spaces are also reproducing kernel Hilbert spaces. Let $\knl:X\times X\rightarrow\mathbb{R}$ be a reproducing kernel and $\hat{\knl}:=\mathcal{F}(\knl)$ be the Fourier transform of $\knl$. Suppose $\hat{\knl}$ has algebraic decay as follows
\begin{equation*}
    \hat{\knl}(\xi) \sim (1+\|\xi\|_2)^{-\tau}, \quad \tau>d/2.
\end{equation*}
Then it has been shown in \cite{Fu,Ber} that the RKHS $H_X$ induced by $\knl$ itself is a Sobolev space with smoothness index $\tau$. Specifically, we have
\begin{equation}
\label{eq:HX_WX}
    H_X\approx W^{\tau,2}(X)
\end{equation}
with equivalent norms. Following the arguments of Theorem 5 in \cite{Fu} the equivalence displayed in Eq. \ref{eq:HX_WX} can be extended to the space of restrictions. When $\Omega\subseteq X$ is the submanifold defined above, then we have
\begin{equation}
    \Restr{\Omega}H_X \approx \Restr{\Omega}W^{\tau,2}(X) \approx W^{\tau-(d-k)/2,2}(\Omega).
\end{equation}

%
%
\section{Main Results}
\label{sec:main_results}
\subsection{Restriction of Evolution on Manifolds}
Ultimately we are interested in restricting the evolution equations governed by the DPS over $X$ to a subset $\Omega\subseteq X$. This section shows that 
$$
\frac{d}{dt}\left (\Restr{\Omega} h(t) \right ) = \Restr{\Omega}\left (\frac{d}{dt} h(t)  \right )
=\Restr{\Omega} \dot{h}(t),
$$
where the derivative is understood in the strong sense. 
We begin with a lemma that shows that $\Restr\Omega\dot{h}(t)=\frac{d}{dt}(\Restr{\Omega} h(t))$ where $\frac{d}{dt}$ in the proof is temporarily understood as the weak derivative with respect to time.
\begin{lemma}
\label{lm:commute}
Let $h\in C^{1}([0,T],H_\Omega)$. Then,
$$
\Restr{\Omega} \dot{h}(t) = \frac{d}{dt}(\Restr{\Omega} h(t))
$$
with $\frac{d}{dt}$ the weak derivative in time. 
\end{lemma}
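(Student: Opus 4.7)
The plan is to unfold the definition of the weak time derivative and exploit the fact that $\Restr{\Omega}$ is a bounded linear operator, so it passes through Bochner integrals against scalar test functions. By the definition, establishing $\frac{d}{dt}(\Restr{\Omega} h(t)) = \Restr{\Omega} \dot{h}(t)$ in the weak sense means showing that, for every scalar test function $\varphi \in C^\infty_c((0,T))$,
\begin{equation*}
\int_0^T \varphi'(t)\, \Restr{\Omega} h(t)\, dt = -\int_0^T \varphi(t)\, \Restr{\Omega} \dot{h}(t)\, dt,
\end{equation*}
where the integrals are Bochner integrals in $\Restr{\Omega} H_X$.

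First I would verify that the relevant integrands lie in the appropriate Bochner spaces. Since $h\in C^1([0,T],H_\Omega)$, both $h$ and $\dot h$ are continuous $H_\Omega$-valued (hence $H_X$-valued) maps on the compact interval $[0,T]$, so they are Bochner integrable on $[0,T]$. The restriction operator $\Restr{\Omega}:H_X\to \Restr{\Omega} H_X$ is bounded by definition of the norm on $\Restr{\Omega} H_X$ (in fact its restriction to $\Ho$ is an isometry, via the extension operator $\Exten{\Omega}$), so $\Restr{\Omega} h$ and $\Restr{\Omega}\dot h$ are continuous, and hence Bochner integrable, as $\Restr{\Omega} H_X$-valued maps. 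Next, because $h$ is strongly $C^1$ in time, the standard integration-by-parts identity for Bochner integrals against a compactly supported scalar test function applies, yielding
\begin{equation*}
\int_0^T \varphi'(t)\, h(t)\, dt = -\int_0^T \varphi(t)\, \dot{h}(t)\, dt \quad \text{in } H_\Omega.
\end{equation*}
Applying the bounded linear operator $\Restr{\Omega}$ to both sides, and using the fact that bounded linear operators commute with Bochner integrals, converts this into the desired identity in $\Restr{\Omega} H_X$.

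The main obstacle I anticipate is purely a bookkeeping one, namely being careful about which space each object lives in so that the commutation of $\Restr{\Omega}$ with the Bochner integral is legitimate. The crucial enabler is precisely the construction already recalled in the excerpt: the inner product on $\Restr{\Omega} H_X$ is defined via $\Exten{\Omega}$ so that $\Restr{\Omega}|_{\Ho}$ is an isometry, giving the required boundedness for free. Once that is in place, no analytic subtlety remains; the result is essentially functorial. The only other point to note is that although the conclusion is stated with a weak time derivative, the $C^1$ hypothesis on $h$ makes the strong and weak derivatives coincide, so in the subsequent development one may freely interpret the equality $\Restr{\Omega}\dot h(t) = \tfrac{d}{dt}(\Restr{\Omega} h(t))$ in the strong sense, as the surrounding discussion intends.
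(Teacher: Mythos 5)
Your proof is correct, and it reaches the conclusion by a slightly different mechanism than the paper. The paper works entirely at the level of the weak formulation: it tests $h$ against $\phi(\cdot)w$ for $w\in\Ho$ and scalar test functions $\phi$, rewrites each inner product $(\cdot,\cdot)_{\Ho}$ as $(\Exten{\Omega}\Restr{\Omega}\,\cdot\,,\Exten{\Omega}\Restr{\Omega}\,\cdot\,)_{H_X}=(\Restr{\Omega}\,\cdot\,,\Restr{\Omega}\,\cdot\,)_{\Restr{\Omega}H_X}$ using the definition of the inner product on $\Restr{\Omega}H_X$, and then invokes surjectivity of $\Restr{\Omega}$ onto $\Restr{\Omega}H_X$ so that the test vectors $\Restr{\Omega}w$ exhaust the target space. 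You instead establish the integration-by-parts identity for Bochner integrals (legitimate under the $C^1$ hypothesis) and push the bounded operator $\Restr{\Omega}$ through the integrals. The two arguments are equivalent in content --- pairing your Bochner identity with an arbitrary element of $\Restr{\Omega}H_X$ recovers the paper's computation --- but yours replaces the surjectivity step and the inner-product identity with the standard fact that bounded linear operators commute with Bochner integrals, and it is more explicit about integrability, which the paper leaves implicit. Your closing remark that the $C^1$ hypothesis lets one read the identity in the strong sense matches the paper's own follow-up observation, which it justifies separately via a difference-quotient estimate after the lemma.
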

\begin{proof}
A mapping $h:[0,T]\rightarrow H_\Omega$ is weakly differentiable if there is a distribution $v=\dot{h}:[0,T]\rightarrow H_\Omega$ such that 
\begin{align}
\int_0^T (h(\zeta),\dot{\phi}(\zeta)w)_{H_\Omega} d\zeta=-\int_0^T(v,\phi(\zeta)w)_{H_\Omega}d\zeta
\label{eq:weakDef}
\end{align}
for all $w\in H_\Omega$ and $\phi\in C_0^\infty[0,T]$ \cite{Zei}. From Section \ref{sec:rkh_embedding} we know that the projection operator can be expressed as $\mathbf{P}_\Omega  =\Exten{\Omega}\Restr{\Omega}$. Note that for $f\in\Ho$ the projection operator acts as the identity, so the left hand side of Eq. \ref{eq:weak1} can be written as 
\begin{align}
&\int_0^T(\mathbf{P}_\Omega h(\zeta), \mathbf{P}_\Omega \dot{\phi}(\zeta)w)_{H_X} d\zeta \nonumber \\
&\hspace{4em} = \int_0^T(\Exten{\Omega}\Restr{\Omega} h(\zeta), \Exten{\Omega}\Restr{\Omega} \dot{\phi}(\zeta)w)_{H_X} d\zeta, \nonumber \\
&\hspace{4em} = \int_0^T(\Restr{\Omega} h(\zeta),  \Restr{\Omega}\dot{\phi}(\zeta)w)_{\Restr{\Omega}H_X} d\zeta. \label{eq:weak1}
\end{align}
The right hand side of Equation \ref{eq:weak1} can similarly be expressed as 
\begin{align}
&\int_0^T(v(\zeta),  \phi(\zeta)w)_{H_\Omega} d\zeta, \nonumber \\
&\hspace{4em} = \int_0^T(\Exten{\Omega}\Restr{\Omega}v(\zeta), \phi(\zeta)\Exten{\Omega}\Restr{\Omega}w(\zeta))_{H_X} d\zeta, \nonumber \\
&\hspace{4em} =\int_0^T(\Restr{\Omega} v(\zeta),  \phi(\zeta)\Restr{\Omega}w)_{\Restr{\Omega}H_X} d\zeta. \label{eq:weak2}
\end{align}
Combining Equations \ref{eq:weak1} and \ref{eq:weak2}, we have
\begin{align}
&\int_0^T( \Restr{\Omega} h(\zeta), \dot{\phi}(\zeta) \Restr{\Omega} w)_{\Restr{\Omega}(H_X)} d\zeta \nonumber \\
&\hspace{4em} = -\int_0^T( \Restr{\Omega} v(\zeta), \phi(\zeta) \Restr{\Omega} w)_{\Restr{\Omega}(H_X)} d\zeta
\end{align}
for all $\phi\in C_0^\infty[0,T]$ and $w\in H_\Omega$. Since the operator $\Restr{\Omega}$ is onto $\Restr{\Omega}H_X$, the conclusion of the lemma follows.
\end{proof}

Since $\Restr{\Omega}$ is a bounded linear operator, we also know that 
\begin{align*}
    &\frac{\| (\Restr{\Omega} f)(t+\Delta) - (\Restr{\Omega} f)(t) \|_{\Restr{\Omega}(H_X)}}{\Delta} \\
    &\hspace{2em} \leq \|\Restr{\Omega}\| \frac{\|f(t+\Delta)-f(t)\|_{H_X}}{\Delta}\rightarrow \|\Restr{\Omega}\| \|\dot{f}(t)\|_{H_X}, 
\end{align*}
which implies that the map $t\mapsto (\Restr{\Omega} f)(t)$ is strongly differentiable.  This means that $\Restr{\Omega} \dot{f}=\frac{d}{dt}(\Restr{\Omega} f)$ where now the both derivatives are interpreted in the strong sense. 

With the conclusion from Lemma \ref{lm:commute}, we are able to derive the evolution of the finite dimensional approximation restricted to the manifold $\Omega$. Apply the restriction operator $\Restr{\Omega}$ to both sides of Eq. \ref{eq:err_fbar}, and the error equation with respect to the restriction $\Restr{\Omega}\bar{f}_n(t)$ is 
\begin{align}
\label{eq:err_rfbar}
&\frac{d}{dt}\left(\Restr{\Omega}\bar{f}_n(t)\right) = \Restr{\Omega}\dot{\bar{f}}_n(t) \nonumber \\
 &\hspace{3em}=-\gamma\Restr{\Omega}\mathbf{P}_{\Omega_n}(B\ev_{x(t)})^*P\bar{x}_n(t) \nonumber \\
 &\hspace{5em}+ \gamma\Restr{\Omega}(I-\mathbf{P}_{\Omega_n})(B\ev_{x(t)})^*P\tilde{x}(t).
\end{align}


\subsection{Error Estimates of Sobolev Type}
In this section we derive the primary theoretical result of this paper that is summarized in Theorem \ref{th:sobolev} and Corollary \ref{cor:sobolevrate}. First we review some results of an analysis in \cite{Nar2005,Ar2007} about Sobolev error bounds for scattered data interpolation. Let $t$ and $\mu$ be the orders of two Sobolev spaces of functions defined over a smooth manifold $\Omega$. Clearly, given $t>\mu$, it follows that $W^{t,2}(\Omega)\subseteq W^{\mu,2}(\Omega)$. Suppose the function $u$ has a set of zero points $\Omega_n\subset\Omega$ (i.e. $u|_{\Omega_n}=0$)  which are distributed densely enough in $\Omega$. This theorem characterizes the relationship between $\|u\|\wsub{t}$ and $\|u\|\wsub{\mu}$ in the term of the fill distance of zeros $h_{\Omega_n,\Omega}$, which is defined below.

\begin{defn}[Fill Distance \cite{Fu}]
For a finite set of discrete points $\Omega_n=\{\xi_i\}_{i=1}^n$ in a metric space $\Omega$, the fill distance $h_{\Omega_n,\Omega}$ of $\Omega_n$ with respect to $\Omega$ is defined as
\begin{equation*}
    h_{\Omega_n,\Omega}:=\sup_{x\in\Omega}\min_{\xi_i\in\Omega_n} d(x,\xi_i),
\end{equation*}
where $d(\cdot,\cdot)$ is the intrinsic metric in $\Omega$.
\end{defn}
In the case which is of the most interest to this paper, the set $\Omega$ is a compact smooth Riemannian submanifold in $\Rd$, and the discrete set $\Omega_n$ is the set of interpolation points in the manifold. With this definition in mind, the following theorem states the relationship between $\|u\|\wsub{t}$ and $\|u\|\wsub{\mu}$.
\begin{thm}
\label{th:many0}
Let $\Omega\subseteq\Rd$ be a smooth $k$-dimensional manifold, $t\in \mathbb{R}$ with $t>k/2$, $\mu\in\mathbb{N}_0$ with  $0\leq \mu \leq \lceil t\rceil -1$.Then there is a constant $h_\Omega$ such that if the fill distance $h_{\Omega_n,\Omega}\leq h_\Omega$ and $u\in W^{t,2}(\Omega)$ satisfies $u|_{{\Omega_n}}=0$, then
\begin{equation*}
\|u\|_{W^{\mu,2}(\Omega)}\lesssim h_{\Omega_n,\Omega}^{t-\mu}\|u\|_{W^{t,2}(\Omega)}.    
\end{equation*}
\end{thm}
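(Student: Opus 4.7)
The plan is to reduce the global estimate on the manifold to a collection of local estimates on Euclidean coordinate patches, where a classical zeros lemma of Narcowich--Ward--Wendland (or the Bramble--Hilbert lemma with a local polynomial reproduction argument) applies. Because $\Omega$ is a smooth compact $k$-dimensional manifold, I would first fix a finite atlas $\{(U_\alpha,\varphi_\alpha)\}_{\alpha=1}^N$ with $\varphi_\alpha:U_\alpha\to V_\alpha\subseteq\mathbb{R}^k$ bi-Lipschitz, where each $V_\alpha$ is a Lipschitz (even $C^\infty$) domain satisfying an interior cone condition, together with a smooth partition of unity $\{\chi_\alpha\}$ subordinate to $\{U_\alpha\}$. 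By the definition of the Sobolev norm on $\Omega$ via covariant derivatives and the volume measure, the global norm $\|u\|_{W^{t,2}(\Omega)}$ is equivalent to $\sum_\alpha \|(\chi_\alpha u)\circ\varphi_\alpha^{-1}\|_{W^{t,2}(V_\alpha)}$, with constants depending only on the atlas.

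Next I would invoke the Euclidean zeros lemma on each patch. Setting $u_\alpha := (\chi_\alpha u)\circ\varphi_\alpha^{-1}$ and $Z_\alpha := \varphi_\alpha(\Omega_n\cap U_\alpha)$, one has $u_\alpha|_{Z_\alpha}=0$ and the Euclidean fill distance $h_{Z_\alpha,V_\alpha}$ is comparable to $h_{\Omega_n,\Omega}$ up to Lipschitz constants of $\varphi_\alpha$. The Euclidean zeros lemma (see Theorem 2.12 of Narcowich--Ward--Wendland or Brenner--Scott) gives, for $t>k/2$ and integer $\mu\le\lceil t\rceil-1$,
\begin{equation*}
\|u_\alpha\|_{W^{\mu,2}(V_\alpha)} \lesssim h_{Z_\alpha,V_\alpha}^{t-\mu} \|u_\alpha\|_{W^{t,2}(V_\alpha)}.
\end{equation*}
Squaring, summing over $\alpha$, and using the norm equivalence from the atlas yields the claimed bound.

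The Euclidean zeros lemma itself is the technical heart of the argument, so I would outline how it is established (rather than re-prove it in detail): cover $V_\alpha$ by a family of balls $B(y,ch_{\Omega_n,\Omega})$ of radius proportional to the fill distance that satisfy a uniform interior cone condition; in each ball, use the density condition on $Z_\alpha$ to build a stable local polynomial reproduction of degree $m=\lceil t\rceil -1$ that interpolates at zeros of $u$; apply Taylor's theorem with integral remainder to estimate $u$ by its Taylor polynomial, which vanishes by the polynomial reproduction, to obtain a pointwise bound $|\partial^\beta u(x)| \lesssim h_{\Omega_n,\Omega}^{t-|\beta|}|u|_{W^{t,2}(B)}$ for $|\beta|\le\mu$; finally integrate and use the finite-overlap property of the covering to get the $W^{\mu,2}$-bound. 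The non-integer case $t\notin\mathbb{N}$ is handled by interpolating between $W^{\lfloor t\rfloor,2}$ and $W^{\lceil t\rceil,2}$ estimates, or equivalently by using the Sobolev--Slobodeckij semi-norm in the Taylor remainder estimate.

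The main obstacle I anticipate is the local polynomial reproduction step, which requires knowing that any $h$-dense set inside a ball of radius $\sim h$ on the manifold can carry a bounded-norm linear functional reproducing polynomials up to degree $\lceil t\rceil -1$. On a manifold this hinges on the fact that small geodesic balls are, via the exponential map, close to Euclidean balls with controlled distortion, so the Euclidean Markov-type stability estimates survive after pulling back through $\varphi_\alpha$. This is precisely where the threshold $h_\Omega$ enters: it must be chosen small enough that within each chart the distortion of the metric and the cone condition are uniformly controlled, and that each ball $B(x,c\,h_{\Omega_n,\Omega})$ lies inside one chart. Once that is secured, the rest of the proof is an assembly of standard estimates.
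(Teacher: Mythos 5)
The paper does not actually prove Theorem~\ref{th:many0}; it imports the result from the scattered-data-approximation literature (the works cited as \cite{Nar2005,Ar2007}, with the manifold version taken from \cite{Fu}), so there is no in-paper argument to compare yours against. Your outline is, however, a faithful reconstruction of how that literature establishes the result: localize via a finite atlas and partition of unity, reduce to the Euclidean ``zeros lemma'' on cone-condition domains via norm equivalence of chart-based Sobolev norms, and prove the Euclidean lemma by covering with balls of radius comparable to the fill distance, local polynomial reproduction at the zero set, and Taylor estimates. That is essentially the Fuselier--Wright / Hangelbroek--Narcowich--Ward strategy, so the approach is sound.

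Two technical points in your sketch deserve more care. First, the claim that $h_{Z_\alpha,V_\alpha}$ is comparable to $h_{\Omega_n,\Omega}$ is not automatic: a point near the boundary of $U_\alpha$ may have its nearest zero in a neighboring chart, so $Z_\alpha=\varphi_\alpha(\Omega_n\cap U_\alpha)$ can be far from dense in all of $V_\alpha$. The standard fix is to take $\operatorname{supp}\chi_\alpha$ compactly contained in $U_\alpha$ and choose $h_\Omega$ small enough that the $h_{\Omega_n,\Omega}$-neighborhood of $\operatorname{supp}\chi_\alpha$ stays inside $U_\alpha$ (and to note that $u_\alpha$ vanishes identically outside $\varphi_\alpha(\operatorname{supp}\chi_\alpha)$, so only the density over that smaller set matters); you gesture at this but should make it explicit, since it is exactly where the threshold $h_\Omega$ is determined. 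Second, your first suggestion for the non-integer case --- abstract interpolation between $W^{\lfloor t\rfloor,2}$ and $W^{\lceil t\rceil,2}$ --- does not work as stated, because the decomposition underlying the interpolation $K$-functional does not preserve the constraint $u|_{\Omega_n}=0$, so the intermediate estimate cannot be interpolated from the two endpoint zeros lemmas. Your alternative, carrying the Sobolev--Slobodeckij seminorm through the Taylor/polynomial-reproduction remainder, is the route the cited references actually take and is the one you should commit to.
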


We denote the interpolation operator over $\Omega_n$ by $I_{\Omega_n}$. For a function $f\in W^{t,2}(\Omega)$, the interpolation error $(I-I_{\Omega_n})f$ by definition has zeros over $\Omega_n$. A corollary is introduced in \cite{Fu} to characterize the decaying rate of interpolation error in the native space (i.e. RKHS). 
\begin{cor}
\label{cor:rkhs_bound}
Let $\Omega \subseteq X:=\Rd$ be a $k$-dimensional smooth manifold, and let the native space $H_X$ be continuously embedded in a Sobolev space $W^{\tau,2}(X)$
with $\tau>d/2$, so that $\|f\|_{W^{\tau,2}(\mathbb{R}^d)} \lesssim\|f\|_{H_X}. $ Define $s=\tau-(d-k)/2$ and let $0\leq \mu \leq \lceil s\rceil -1$. Then there is a constant $h_\Omega$ such that if $h_{\Omega_n,\Omega}\leq h_\Omega$, then for all $f\in \Restr{\Omega}(H_X)$ we have 
\begin{equation*}
  \|(I-I_{\Omega_n})f\|\wsub{\mu} \lesssim h_{\Omega_n,\Omega}^{s-\mu}\|f\|_{\Restr{\Omega}(H_X)}.  
\end{equation*}
\end{cor}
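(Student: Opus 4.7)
The plan is to combine Theorem~\ref{th:many0} with the Sobolev--RKHS equivalence on the submanifold and the nonexpansiveness of RKHS interpolation. I would set $u := (I - I_{\Omega_n})f$ and first observe that, by the defining property of the interpolant, $u|_{\Omega_n} = 0$, so $u$ is precisely the type of ``zero-rich'' function handled by Theorem~\ref{th:many0}.

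To invoke that theorem I would choose $t := s = \tau - (d-k)/2$. The hypothesis $t > k/2$ is automatic, since $\tau > d/2$ forces $s > k/2$, and the range $0 \leq \mu \leq \lceil s \rceil - 1$ is exactly the assumption of the corollary. Provided $h_{\Omega_n,\Omega}$ is below the constant $h_\Omega$ supplied by Theorem~\ref{th:many0}, this yields
$$
\|(I-I_{\Omega_n})f\|\wsub{\mu} \lesssim h_{\Omega_n,\Omega}^{s-\mu}\, \|(I-I_{\Omega_n})f\|\wsub{s}.
$$
Next, the Sobolev--RKHS chain already established in Section~\ref{sec:rkh_embedding} gives $W^{s,2}(\Omega) \approx \Restr{\Omega}W^{\tau,2}(X) \approx \Restr{\Omega}H_X$ with equivalent norms, so I can replace the $W^{s,2}(\Omega)$-factor on the right by $\|(I-I_{\Omega_n})f\|_{\Restr{\Omega}H_X}$, absorbing the equivalence constant into $\lesssim$.

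It remains to bound the interpolation error by $\|f\|_{\Restr{\Omega}H_X}$. Because $\Restr{\Omega}H_X$ is itself an RKHS with reproducing kernel $\knl|_{\Omega\times\Omega}$, interpolation at nodes lying in $\Omega_n \subseteq \Omega$ coincides with orthogonal projection onto the span of $\{\knl|_{\Omega\times\Omega}(\cdot,\xi_i)\}_{i=1}^n$; hence $I - I_{\Omega_n}$ is an orthogonal projection in $\Restr{\Omega}H_X$ and is automatically nonexpansive, giving $\|(I-I_{\Omega_n})f\|_{\Restr{\Omega}H_X} \leq \|f\|_{\Restr{\Omega}H_X}$. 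Chaining the three inequalities yields the corollary.

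The main obstacle I anticipate is the last step: the interpolation operator $I_{\Omega_n}$ is originally introduced as the projection $\mathbf{P}_{\Omega_n}$ acting on the ambient space $H_X$, not intrinsically on $\Restr{\Omega}H_X$, so one must verify that these two interpretations agree. The cleanest way is to pass through the isometry $\Exten{\Omega} : \Restr{\Omega}H_X \to \Ho \subset H_X$ from Section~\ref{sec:rkh_embedding}: transport $f$ and $I_{\Omega_n}f$ into $\Ho$, observe that $\Exten{\Omega}I_{\Omega_n}f$ is the unique element of $\mathrm{span}\{\knl_{\xi_i}\}$ agreeing with $\Exten{\Omega}f$ on $\Omega_n$, and then exploit that $\mathbf{P}_{\Omega_n}$ is nonexpansive in $H_X$ before restricting back with $\Restr{\Omega}$.
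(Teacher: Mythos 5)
Your proposal is correct, and since the paper does not actually prove this corollary (it imports it from \cite{Fu}), there is nothing to diverge from: your chain --- Theorem~\ref{th:many0} applied with $t=s$ to $u=(I-I_{\Omega_n})f$, which vanishes on $\Omega_n$, followed by the trace-theorem embedding $\Restr{\Omega}(H_X)\hookrightarrow W^{s,2}(\Omega)$ and the nonexpansiveness of the interpolation projection in the restricted native space --- is exactly the standard derivation behind the cited result, and your care in identifying $I_{\Omega_n}$ on $\Restr{\Omega}(H_X)$ with $\Restr{\Omega}\mathbf{P}_{\Omega_n}$ via $\Exten{\Omega}$ is the right way to close the one genuine subtlety. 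The only minor imprecision is your claim of norm \emph{equivalence} $\Restr{\Omega}H_X \approx W^{s,2}(\Omega)$: the corollary only assumes the one-sided embedding $H_X\hookrightarrow W^{\tau,2}(X)$, but since you only use the direction $\|u\|_{W^{s,2}(\Omega)}\lesssim\|u\|_{\Restr{\Omega}(H_X)}$, which that hypothesis does supply, the argument stands.
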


With this error bound in mind for interpolation and projection, we now turn to the main result of this paper.
\begin{thm}
\label{th:sobolev}
Suppose that $\Omega$ is a compact, connected, $k$-dimensional, regularly embedded Riemannian submanifold of $X:=\mathbb{R}^d$, $\Restr{\Omega}(H_X) \hookrightarrow  W^{s,2}(\Omega)$ for some $s>k/2$, and the orbit of the unknown system $\Gamma^+(x_0)\subseteq \Omega$. Then there exist two constants $a,b>0$ such that for all $t\in[0,T]$, 
\begin{align*}
    &\|\bar{x}_n(t)\|^2 + \|\Restr{\Omega}\bar{f}_n(t)\|\wsub{s}^2 \\ 
    &\hspace{2em}\leq e^{bt}\bigg(  \|(I-\Pi_{\Omega_n})\Restr{\Omega}\hat{f}_0\|^2\wsub{s} \\
    &\hspace{4em}+ a\int_0^t \|(I-\Pi_{\Omega_n})\Restr{\Omega}\knl_{x(\zeta)}\|\wsub{s}^2 d\zeta \bigg).
\end{align*}
Here $\Pi_{\Omega_n}:\Restr{\Omega}H_X\rightarrow\Restr{\Omega}\Hn$ denotes the projection operator defined over the space of restriction $\Restr{\Omega}H_X$ onto the space $\Restr{\Omega}\Hn$. 
\end{thm}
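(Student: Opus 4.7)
My plan is to convert the coupled error system in Eq. \ref{eq:err_xbar}-\ref{eq:err_fbar} into a single scalar Lyapunov differential inequality of the form $\dot V(t)\leq bV(t)+a\|(I-\Pi_{\Omega_n})\Restr{\Omega}\knl_{x(t)}\|\wsub{s}^2$ and then close with Gr\"onwall. I would use the Lyapunov candidate
\[ V(t) = \alpha\,\bar{x}_n(t)^T P\bar{x}_n(t) + \beta\|\Restr{\Omega}\bar{f}_n(t)\|\wsub{s}^2, \]
with $\alpha,\beta>0$ tuned so that cross terms are dominated; the weights are then absorbed into the theorem's $a,b$. The essential preliminary identity is $\Restr{\Omega}\mathbf{P}_{\Omega_n}=\Pi_{\Omega_n}\Restr{\Omega}$ on $H_X$, which holds because both operators, applied to any $g\in H_X$, produce the unique linear combination of $\{\Restr{\Omega}\knl_{\xi_i}\}_{i=1}^n$ that agrees with $\Restr{\Omega}g$ on $\Omega_n$. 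Combined with $(B\ev_{x})^*v=(B^Tv)\knl_{x}$, this lets me rewrite Eq. \ref{eq:err_rfbar} as
\[ \tfrac{d}{dt}\Restr{\Omega}\bar{f}_n(t) = -\gamma(B^T P\bar{x}_n(t))\Pi_{\Omega_n}\Restr{\Omega}\knl_{x(t)} + \gamma(B^T P\tilde{x}(t))(I-\Pi_{\Omega_n})\Restr{\Omega}\knl_{x(t)}. \]

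Differentiating $V$ in time produces, after using $PA+A^T P=-Q$, a dissipative term $-\alpha\bar{x}_n^T Q\bar{x}_n$, a state cross term $2\alpha(B^T P\bar{x}_n)\bar{f}_n(x(t))$, and two Sobolev-inner-product cross terms weighted by $(B^T P\bar{x}_n)$ and $(B^T P\tilde{x}(t))$ respectively. Because I am working in $W^{s,2}(\Omega)$ rather than the restricted RKHS inner product, the clean orthogonal cancellation available in an RKHS Lyapunov analysis is lost, and each cross term must be estimated by Cauchy-Schwarz followed by Young's inequality. Three estimates drive this step: (i) since $s>k/2$, the Sobolev embedding $W^{s,2}(\Omega)\hookrightarrow C(\Omega)$, together with $x(t)\in\Gamma^+(x_0)\subseteq\Omega$, yields $|\ev_{x(t)}\bar{f}_n|=|(\Restr{\Omega}\bar{f}_n(t))(x(t))|\leq C_1\|\Restr{\Omega}\bar{f}_n(t)\|\wsub{s}$; (ii) $\Pi_{\Omega_n}$ is an orthogonal projector in $\Restr{\Omega}H_X$, and combined with the embedding $\Restr{\Omega}H_X\hookrightarrow W^{s,2}(\Omega)$ and $\|\Restr{\Omega}\knl_{x}\|_{\Restr{\Omega}H_X}^2=\knl(x,x)\leq\bar{k}^2$ this bounds $\|\Pi_{\Omega_n}\Restr{\Omega}\knl_{x(t)}\|\wsub{s}$ uniformly; and (iii) on the finite interval $[0,T]$ the continuous trajectory of the infinite-dimensional error system Eq. \ref{eq:dyn_inferr} supplies a uniform bound on $\|\tilde{x}(t)\|$.

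Tuning the Young's parameters so that residual $\|\bar{x}_n\|^2$ contributions are controlled by the dissipation produces the scalar inequality $\dot V(t)\leq bV(t)+a\|(I-\Pi_{\Omega_n})\Restr{\Omega}\knl_{x(t)}\|\wsub{s}^2$. Gr\"onwall's inequality and the trivial bound $e^{-b\zeta}\leq 1$ then deliver
\[ V(t)\leq e^{bt}\Bigl(V(0)+a\int_0^t\|(I-\Pi_{\Omega_n})\Restr{\Omega}\knl_{x(\zeta)}\|\wsub{s}^2\,d\zeta\Bigr). \]
Under the natural initialization $\hat{x}_n(0)=\hat{x}(0)$ and $\hat{f}_n(0)=\mathbf{P}_{\Omega_n}\hat{f}_0$, one has $\bar{x}_n(0)=0$ and $\Restr{\Omega}\bar{f}_n(0)=(I-\Pi_{\Omega_n})\Restr{\Omega}\hat{f}_0$, giving exactly the initial-datum term in the theorem; the weights $\alpha$, $\beta$ and $\lambda_{\min}(P)$ are swept into the final constants $a$ and $b$.

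The main obstacle is the Sobolev cross-term bookkeeping: the automatic cancellation enjoyed in the restricted RKHS inner product is no longer available, so each of the three cross terms above must be estimated by combining the Sobolev embedding into $C(\Omega)$ (requires $s>k/2$), the continuous embedding $\Restr{\Omega}H_X\hookrightarrow W^{s,2}(\Omega)$ applied to $\Pi_{\Omega_n}\Restr{\Omega}\knl_{x(t)}$, and the finite-interval boundedness of $\tilde{x}(t)$, so that every non-dissipative contribution to $\dot V$ ends up proportional either to $V(t)$ itself or to the desired forcing term $\|(I-\Pi_{\Omega_n})\Restr{\Omega}\knl_{x(t)}\|\wsub{s}^2$.
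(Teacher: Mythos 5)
Your proposal is correct and follows essentially the same route as the paper's proof: an energy estimate on $\|\bar{x}_n(t)\|^2+\|\Restr{\Omega}\bar{f}_n(t)\|_{W^{s,2}(\Omega)}^2$, term-by-term Cauchy--Schwarz bounds using the embeddings $\Restr{\Omega}(H_X)\hookrightarrow W^{s,2}(\Omega)\hookrightarrow C(\Omega)$ together with the commutation $\Restr{\Omega}\mathbf{P}_{\Omega_n}=\Pi_{\Omega_n}\Restr{\Omega}$ and the uniform bound $\|\knl_{x(t)}\|_{H_X}\leq\bar{k}$, followed by a Gr\"onwall argument and the identification $\Restr{\Omega}\bar{f}_n(0)=(I-\Pi_{\Omega_n})\Restr{\Omega}\hat{f}_0$. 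The only cosmetic deviation is your weighted candidate $\alpha\,\bar{x}_n^TP\bar{x}_n+\beta\|\Restr{\Omega}\bar{f}_n\|_{W^{s,2}(\Omega)}^2$ exploiting the Lyapunov equation for a dissipative $-Q$ term, where the paper uses the unweighted Euclidean norm and simply bounds $((A+A^T)\bar{x}_n,\bar{x}_n)$ by $2\|A\|\|\bar{x}_n\|^2$; both choices are absorbed into the same constants $a$ and $b$.
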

\begin{proof}
The proof of the inequality below uses the fact that $\Restr{\Omega} \left (\frac{d}{dt}h(t) \right )=\frac{d}{dt}(\Restr{\Omega} h(t))$ where the time derivative is understood in the strong sense, i.e. $\Restr{\Omega}\dot{h}(t)=\frac{d}{dt}(\Restr{\Omega} h(t))$. By Eq. \ref{eq:err_xbar} and \ref{eq:err_rfbar} we have
{
\small
\begin{align*}
    &\frac{d}{dt} \left (\|\bar{x}_n(t)\|^2 + \|\Restr{\Omega}\bar{f}_n(t)\|^2\wsub{s}{s} \right) = (\dot{\bar{x}}_n(t),\bar{x}_n(t)) \\
      &\hspace{2em}+ (\bar{x}_n(t),\dot{\bar{x}}_n(t)) + 2\left(\Restr{\Omega}\bar{f}_n(t), \frac{d}{dt}(\Restr{\Omega}\bar{f}_n(t))\right)\wsub{s} \\
     &\hspace{1em} = ((A+A^T)\bar{x}_n(t),\bar{x}_n(t)) + \underbrace{2(B\ev_{x(t)}\bar{f}_n(t),\bar{x}_n(t))}_{\text{term 1}} \\
      &\hspace{2em} + \underbrace{2\gamma\left (\Restr{\Omega} (I-\mathbf{P}_{\Omega_n})\ev_{x(t)}^*B^TP\tilde{x}(t),\Restr{\Omega} \bar{f}_n(t) \right)\wsub{s}}_{\text{term 2}} \\
      &\hspace*{2em} + \underbrace{2\gamma\left(- \Restr{\Omega} \mathbf{P}_{\Omega_n}\ev_{x(t)}^*B^TP\bar{x}_n(t),\Restr{\Omega} \bar{f}_n(t) \right )\wsub{s}}_{\text{term 3}}.
\end{align*}
}
\noindent 
In several of the steps that follow, we use conclusions that result from the assumptions that certain spaces are continuously embedded in others. By choosing certain types of reproducing kernels, it can be guaranteed that the injection $j:\Restr{\Omega}(H_X)\rightarrow W^{s,2}(\Omega)$ is such that we have the continuous embedding
$$
\Restr{\Omega}(H_X) \overset{j}{\hookrightarrow} W^{s,2}(\Omega)
$$
for some $s>k/2$. By the Sobolev embedding theorem the injection $i:W^{s,2}(\Omega)\rightarrow C(\Omega)$ is also continuous, that is 
$$
 W^{s,2}(\Omega)\overset{i}{\hookrightarrow} C(\Omega).
$$
This implies that 
$$
|\ev_{s,x}(f)|:=|f(x)|
\leq \|if\|_{C(\Omega)}\leq \|i\|\|f\|_{W^{s,2}(\Omega)},
$$
and it follows that each evaluation functional $\ev_{s,x}:W^{s,2}(\Omega)\rightarrow \mathbb{R}$ is uniformly bounded by $\|i\|$. By assumption we have that the forward orbit $\Gamma^+(x_0)\subseteq \Omega$,  we conclude that term 1 can be bounded by the expression 
\begin{align}
    &|(B\ev_{x(t)}\Restr{\Omega} \bar{f}_n(t),\bar{x}_n(t))| \notag \\
    &\hspace{2em} \leq \|B\|\|i\|\|\bar{x}_n(t)\| \|\Restr{\Omega} \bar{f}_n(t)\|_{W^{s,2}(\Omega)}. 
    \label{eq:wsterm1}
\end{align}
We bound term 2 by 
\begin{align}
    & \left|\left (\Restr{\Omega} (I-\mathbf{P}_{\Omega_n})\ev_{x(t)}^*B^TP\tilde{x}(t),\Restr{\Omega} \bar{f}_n(t) \right)\wsub{s}\right| \nonumber \\
    &\hspace{2em} \leq \|\Restr{\Omega} (I-\mathbf{P}_{\Omega_n})\ev_{x(t)}^*B^TP\tilde{x}(t)\|\wsub{s}\nonumber \\
    &\hspace{6em}\|\Restr{\Omega} \bar{f}_n(t)\|_{W^{s,2}(\Omega)}, \nonumber \\
    &\hspace{2em} \leq \|B\| \|P\|\|\tilde{x}(t)\|
    \|(I-\Pi_{\Omega_n})\Restr{\Omega} \mathfrak{K}_{x(t)} \|_{W^{s,2}(\Omega)} \nonumber \\
    &\hspace{6em}\|\Restr{\Omega}\bar{f}_n(t)\|_{W^{s,2}(\Omega)}.
\end{align}
We next consider term 3, which satisfies the inequality
\begin{align}
   &\left|\left (\Restr{\Omega} 
    \mathbf{P}_{\Omega_n}\ev_{x(t)}^*B^TP\bar{x}_n(t)  ,\Restr{\Omega} \bar{f}_n(t) \right )_{W^{s,2}(\Omega)}
    \right|  \nonumber \\ 
   &\hspace{2em} \leq \left \|j\Restr{\Omega} 
    \mathbf{P}_{\Omega_n}\ev_{x(t)}^*B^TP\bar{x}_n(t)  \right \|_{W^{s,2}(\Omega)} \nonumber \\
    &\hspace{6em} \times \left \| \Restr{\Omega} \bar{f}_n(t) \right \|_{W^{s,2}(\Omega)} ,
    \nonumber \\
   &\hspace{2em}\leq \bar{k}\|j\|\|\Restr{\Omega}\| \|B\|\|P\|\|\bar{x}_n(t)\| \nonumber \\ &\hspace{6em} \times \|\Restr{\Omega}\bar{f}_n(t)\|_{W^{s,2}(\Omega)}. 
\end{align}
Combining all the terms above, we obtain 
\begin{align*}
    &\frac{d}{dt}\left(\|\bar{x}_n(t)\|^2 + \|\Restr{\Omega} \bar{f}_n(t)\|\wsub{s}^2\right) \\
    &\hspace{1em}\leq \gamma \|B\|^2 \|P\|^2 \|\tilde{x}(t)\|^2 \|(I-\Pi_{\Omega_n})\Restr{\Omega}\knl_{x(t)}\|\wsub{s}^2 \\
    &\hspace{2em} +\big(2\|A\| + \|i\|\|B\|\big)\|\bar{x}_n(t)\|^2\\
    &\hspace{4em} + \big(\gamma\bar{k}^2\|j\|^2\|\Restr{\Omega}\|^2 \|B\|^2\|P\|^2\big) \|\bar{x}_n(t)\|^2 \\
    &\hspace{2em} +\big(2\gamma + \|i\|\|B\|\big)\|\Restr{\Omega}\bar{f}_n(t)\|\wsub{s}^2.
\end{align*}
Let the constants $a,b$ be defined as follows.
\begin{align*}
    a&:= \gamma\|B\|^2\|P\|^2\sup_{\zeta\in[0,T]}\|\tilde{x}(\zeta)\|^2, \\
    b&:= \max \{2\gamma + \|i\|\|B\|, \\
    &\hspace{2em} 2\|A\|+\|i\|\|B\| +\gamma \bar{k}^2 \|j\|^2 \|\Restr{\Omega}\|^2 \|B\|^2 \|P\|^2 \}.
\end{align*}
When we integrate  the inequality above,  it follows that
\begin{align*}
    &\|\bar{x}_n\|^2 + \|\Restr{\Omega}\bar{f}_n(t)\|\wsub{s}^2 \\
    &\hspace{1em}\leq \|\bar{x}_n(0)\|^2 + \|\Restr{\Omega}\bar{f}_n(0)\|\wsub{s}^2 \\
    &\hspace{2em} + \int_0^t a\|(I-\Pi_{\Omega_n})\Restr{\Omega}\knl_{x(\zeta)}\|\wsub{s} d\zeta \\
    &\hspace{2em} +\int_0^t\big(\|\bar{x}(t)\|^2 + \|\Restr{\Omega} \bar{f}_n(\zeta)\|\wsub{s}^2 \big) d\zeta.
\end{align*}
But since $\hat{f}_0\in H_\Omega$, we know that $\mathbf{P}_\Omega \hat{f}_0= \Exten{\Omega} \Restr{\Omega} \hat{f}_0=\hat{f}_0$, and $\Restr{\Omega} \Pi_{\Omega_n}\hat{f}_0=\Pi_{\Omega_n}\Restr{\Omega} \hat{f}_0$. Thus
\begin{align*}
    \|\Restr{\Omega} \bar{f}_n(0)\|_{W^{s,2}(\Omega)} = \|(I-\Pi_{\Omega_n})\Restr{\Omega}\hat{f}_0\|_{W^{s,2}(\Omega)}.
\end{align*}
Combining the above inequalities yields
\begin{align*}
&\|\bar{x}_n(t)\|^2 + \|\Restr{\Omega} \bar{f}_n(t)\|_{W^{s,2}(\Omega)}^2  \\
&\hspace{2em} \leq e^{bt}\|(I-\Pi_{\Omega_n})\Restr{\Omega} \hat{f}_0\|_{W^{s,2}(\Omega)}^2 \\
&\hspace{4em} + ae^{bt}\int_0^t \|(I-\Pi_{\Omega_n})\Restr{\Omega}\knl_{x(\zeta)}\|^2_{W^{s,2}(\Omega)}d\zeta.
\end{align*}
\end{proof}
The next corollary combines the results of Theorem \ref{th:sobolev} and Corollary \ref{cor:rkhs_bound} to obtain the error rates in terms of the fill distance of samples in the manifold. 
\begin{cor}
\label{cor:sobolevrate}
Suppose that $\Omega$ is a compact, connected, regularly embedded $k$-dimensional submanifold of $X:=\mathbb{R}^d$, the kernel $\mathfrak{K}$ is selected so that $H_X\hookrightarrow W^{\tau,2}(X)$ for $\tau>d/2$,  define  $s:=\tau -(d-k)/2$, and let $\mu\in [k/2,\lceil s  \rceil -1]$. Then we have
\begin{align*}
&\|\bar{x}_n(t)\|^2 + \|\Restr{\Omega} \bar{f}_n(t)\|_{W^{\mu,2}(\Omega)}^2  \\
&\hspace{4em} \leq 
\left (\|\Restr{\Omega} \hat{f}_0\|_{\Restr{\Omega}(H_X)}^2 + a\bar{k}^2 t  \right)e^{\tilde{b}t}
h^{2(s-\mu)}_{\Omega,\Omega_n}  .
\end{align*}
\end{cor}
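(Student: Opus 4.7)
The plan is to combine Theorem \ref{th:sobolev}, applied in the weaker Sobolev norm $W^{\mu,2}(\Omega)$, with the scattered-data interpolation estimate of Corollary \ref{cor:rkhs_bound}, thereby converting the two projection-error expressions on the right-hand side of the theorem into explicit powers of the fill distance $h_{\Omega_n,\Omega}$.

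First, I would invoke Theorem \ref{th:sobolev} with $\mu$ playing the role of $s$. This is permissible because the hypothesis $\mu \geq k/2$ together with the chain of continuous embeddings $\Restr{\Omega}(H_X) \hookrightarrow W^{\tau-(d-k)/2,2}(\Omega) = W^{s,2}(\Omega) \hookrightarrow W^{\mu,2}(\Omega)$ (the last because $s\geq \mu$) gives $\Restr{\Omega}(H_X) \hookrightarrow W^{\mu,2}(\Omega)$. The theorem then yields constants $a,\tilde{b}>0$ for which
\begin{align*}
&\|\bar{x}_n(t)\|^2 + \|\Restr{\Omega}\bar{f}_n(t)\|_{W^{\mu,2}(\Omega)}^2 \\
&\quad\leq e^{\tilde{b}t}\Big(\|(I-\Pi_{\Omega_n})\Restr{\Omega}\hat{f}_0\|_{W^{\mu,2}(\Omega)}^2 \\
&\qquad + a\int_0^t \|(I-\Pi_{\Omega_n})\Restr{\Omega}\knl_{x(\zeta)}\|_{W^{\mu,2}(\Omega)}^2\, d\zeta\Big).
\end{align*}

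Second, I would identify $\Pi_{\Omega_n}$ (the orthogonal projection within $\Restr{\Omega}H_X$ onto the span of the restricted kernels centered at $\Omega_n$) with the scattered-data interpolation operator $I_{\Omega_n}$ on $\Restr{\Omega}H_X$; this is the same projection-equals-interpolation identity used in Section \ref{sec:rkh_embedding}, now applied inside the restriction RKHS (whose reproducing kernel is $\knl|_{\Omega\times\Omega}$). Corollary \ref{cor:rkhs_bound} then supplies
$$\|(I-\Pi_{\Omega_n})g\|_{W^{\mu,2}(\Omega)} \lesssim h_{\Omega_n,\Omega}^{s-\mu}\|g\|_{\Restr{\Omega}(H_X)}$$
for every $g\in \Restr{\Omega}(H_X)$. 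I would apply this to $g=\Restr{\Omega}\hat{f}_0$ and, pointwise in $\zeta$, to $g=\Restr{\Omega}\knl_{x(\zeta)}$. For the second choice, because $x(\zeta)\in \Gamma^+(x_0)\subseteq \Omega$, the function $\Restr{\Omega}\knl_{x(\zeta)}$ is the reproducing kernel of $\Restr{\Omega}H_X$ evaluated at the center $x(\zeta)$, hence its native-space norm satisfies $\|\Restr{\Omega}\knl_{x(\zeta)}\|_{\Restr{\Omega}(H_X)}^2=\knl(x(\zeta),x(\zeta))\leq \bar{k}^2$ uniformly in $\zeta$.

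Finally, substituting these two bounds into the inequality from the first step and pulling the common factor $h_{\Omega_n,\Omega}^{2(s-\mu)}$ (which does not depend on $\zeta$) outside the integral yields
$$\|\bar{x}_n(t)\|^2 + \|\Restr{\Omega}\bar{f}_n(t)\|_{W^{\mu,2}(\Omega)}^2 \leq \bigl(\|\Restr{\Omega}\hat{f}_0\|_{\Restr{\Omega}(H_X)}^2+a\bar{k}^2 t\bigr)e^{\tilde{b}t} h_{\Omega_n,\Omega}^{2(s-\mu)},$$
after absorbing the implicit constants from the two $\lesssim$ estimates and from the embedding $\Restr{\Omega}(H_X)\hookrightarrow W^{\mu,2}(\Omega)$ into a suitably enlarged $\tilde{b}$. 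The main obstacle is really only bookkeeping: one must verify that the constants produced by Corollary \ref{cor:rkhs_bound} are independent of $\zeta$ (so that they can be pulled outside the integral) and that the relabeling $s\mapsto \mu$ in Theorem \ref{th:sobolev} remains valid in the boundary case $\mu=k/2$, which follows from the same Sobolev embedding chain used to check its hypothesis.
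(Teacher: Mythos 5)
Your proposal is correct and follows essentially the same route as the paper's own proof: apply Theorem \ref{th:sobolev} with $\mu$ in place of $s$ (justified by the embedding chain $\Restr{\Omega}(H_X)\hookrightarrow W^{s,2}(\Omega)\hookrightarrow W^{\mu,2}(\Omega)$), then convert both projection-error terms to powers of the fill distance via Corollary \ref{cor:rkhs_bound}, and bound $\|\Restr{\Omega}\knl_{x(\zeta)}\|_{\Restr{\Omega}(H_X)}$ uniformly by $\bar{k}$. Your explicit handling of the integral (pulling out the $\zeta$-independent constants to get the factor $a\bar{k}^2 t$) and your remark identifying $\Pi_{\Omega_n}$ with the interpolation operator are exactly the steps the paper performs, the latter implicitly via its citation of Theorem 11 of the reference for the interpolation bound.
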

\begin{proof}
We first observe that under the stated hypotheses the native space $\Restr{\Omega}H_X \hookrightarrow W^{\mu,2}(\Omega)$.  For any two positive $r_1\geq r_2>0$ the associated  Sobolev spaces are a continuous scale of spaces with 
$ W^{r_1,2}(\Omega)\hookrightarrow W^{r_2,2}(\Omega)$. This implies that $W^{s,2}(\Omega)\hookrightarrow W^{\mu,2}(\Omega)$ since $s\geq \mu$. Also, the trace theorem yields 
\begin{align*} 
\|f\|\wsub{\mu} \lesssim &\|f\|\wsub{s} = \|\Exten{\Omega}\Restr{\Omega}f\|\wsub{s} \\
    &\hspace{2em}\lesssim \|\Exten{\Omega} f\|_{W^{\tau,2}(X)} \lesssim \|f\|_{\Restr{\Omega}H_X},
\end{align*}
where the constants in the above string of inequalities depend on $\|\Exten{\Omega}\|$, $\|\Restr{\Omega}\|,$ and the norm of the embedding of $H_X$ into $W^{\tau,2}(X)$. 
Combining these results yields $\Restr{\Omega}H_X\hookrightarrow W^{\mu,2}(\Omega)$ with $\mu\geq k/2$. We can now apply the results of Theorem \ref{th:sobolev} for the choice $s=\mu$ and write
\begin{align*}
    &\|\bar{x}_n(t)\|^2 + \|\Restr{\Omega}\bar{f}_n(t)\|_{W^{\mu,2}(\Omega)}^2\\ 
   &\hspace{2em}\leq e^{bt}\|(I-\Pi_{\Omega_n})\Restr{\Omega}\hat{f}_0\|^2\wsub{\mu} \\
    &\hspace{4em} + ae^{bt}\|(I-\Pi_{\Omega_n})\Restr{\Omega}\knl_{x(t)}\|^2\wsub{\mu}, \\
   &\hspace{2em}\leq e^{bt}h^{2(s-\mu)}_{\Omega,\Omega_n}\|\Restr{\Omega}\hat{f}_0\|_{\Restr{\Omega}H_X}^2 \\
    &\hspace{4em} + ate^{bt} h^{2(s-\mu)}_{\Omega,\Omega_n}\left (\sup_{t\in [0,T]}\|\Restr{\Omega}\knl_{x(t)}\|_{\Restr{\Omega}H_X}\right)^2
\end{align*}
for each $t\in [0,T]$ by Theorem 11 of \cite{Fu}. The bound now follows since $$\sup_{t \in [0,T]}\|\Restr{\Omega}\knl_{x(t)}\|_{\Restr{\Omega}H_X}=\sup_{t\in [0,T]}\|\knl_{x(t)}\|_{H_X}\leq {\bar{k}}.$$ 
\end{proof}

%
%
\section{Numerical Simulation}
\label{sec:simulation}
Corollary \ref{cor:sobolevrate} gives a rate of convergence for finite dimensional approximations of the RKHS embedded estimator. The rate of convergence depends on the density of the sample set $\Omega_n$ in the manifold $\Omega$, which is characterized by the fill distance $h_{\Omega_n,\Omega}$. In this section, this rate of convergence is illustrated qualitatively using numerical simulations. Following the formulation of Eq. \ref{eq:orig_dyn}, the governing equations of the unknown system are selected to be
\begin{equation}
\label{eq:sim_dyn}
\begin{bmatrix}
\dot{x}_1(t) \\ \dot{x}_2(t)
\end{bmatrix} = \begin{bmatrix}
0 & 1 \\ -1 & 0
\end{bmatrix}\begin{bmatrix}
x_1(t) \\ x_2(t)
\end{bmatrix} + \begin{bmatrix}
x_1^2(t) \\ 0
\end{bmatrix},
\end{equation}
where $B=[1,0]^T$ and $f(x_1,x_2) = x_1^2$. Here we assume the linear coefficient matrix $A_0$ is known. By adding and subtracting a selected Hurwitz term $Ax(t)$, the governing equations of the original system can be written as 
\begin{equation*}
    \dot{x}(t) = Ax(t) + (A_0-A)x(t) + B\ev_{x(t)}f.
\end{equation*}
Since $A_0$ and $A$ are known, the term $(A_0-A)x(t)$ can be canceled in the error equation. The governing equations of the finite dimensional RKHS embedded estimator are chosen as
\begin{align*}
    \dot{\hat{x}}_n(t) &= A\hat{x}_n(t) + (A_0-A)x(t) + B\ev_{x(t)}\mathbf{P}^*_{\Omega}\hat{f}_n(t), \\
    \dot{\hat{f}}_n(t) &= \gamma \mathbf{P}_{\Omega} (B\ev_{x(t)})^*P(x(t)-\hat{x}_n(t)).
\end{align*}
This choice yields the error equations that have the form studied in this paper.
\begin{figure}[!h]
    \centering
    \includegraphics[width=\linewidth]{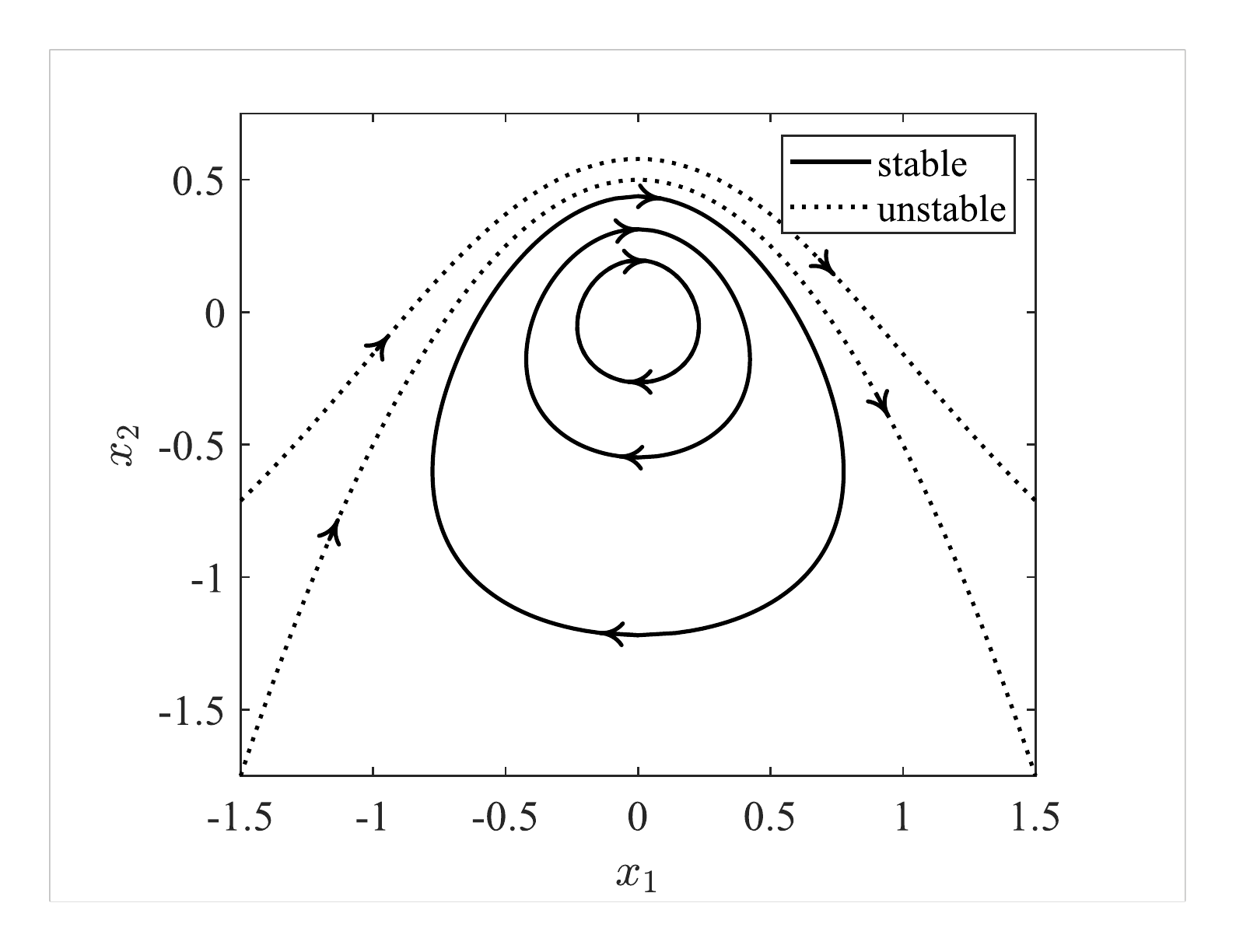}
    \caption{Phase trajectories of the actual system.}
    \label{fig:phase_plot}
\end{figure}
\begin{figure}[!h]
    \centering
    \includegraphics[width=\linewidth]{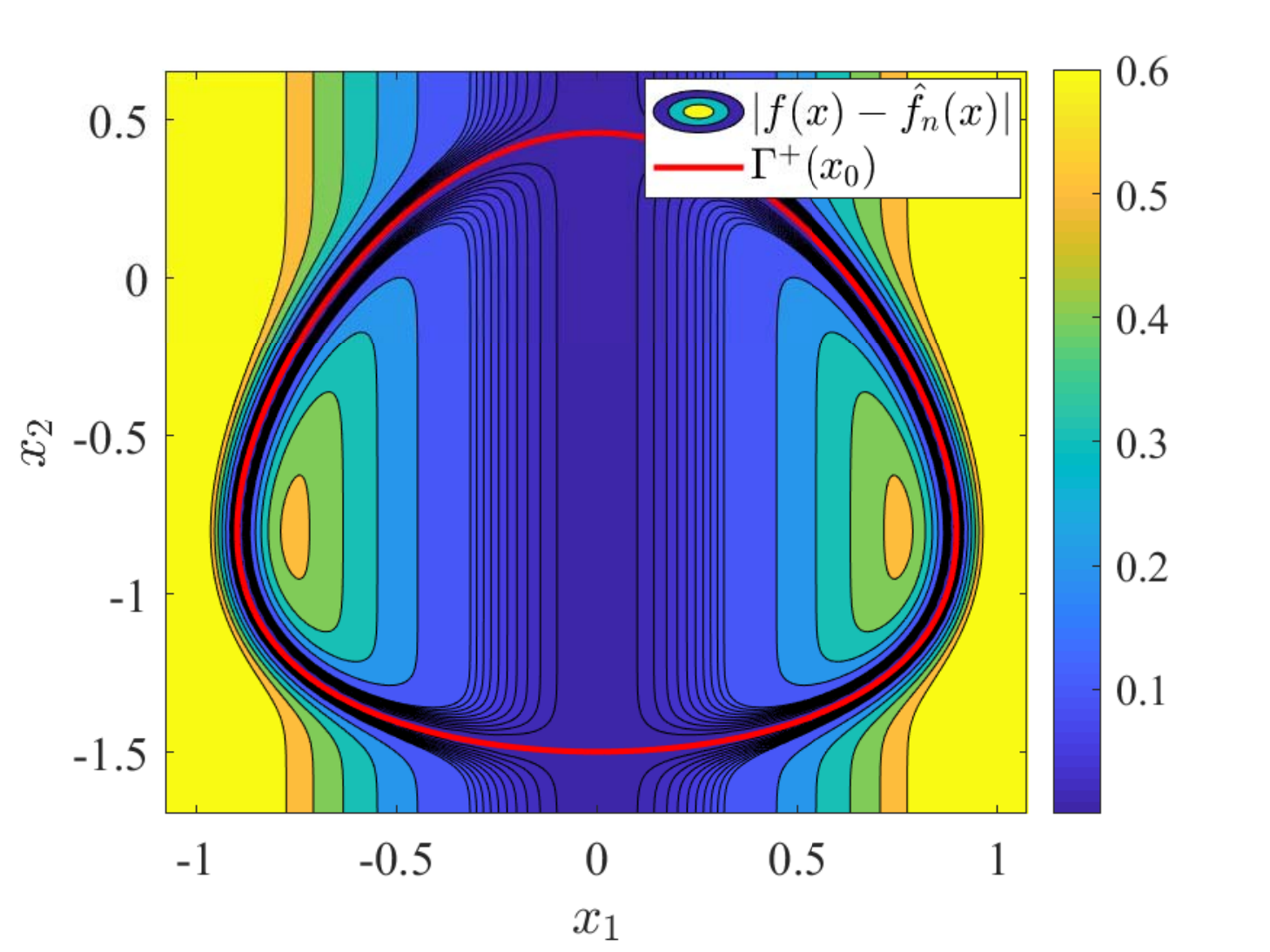}
    \caption{Contour of the estimate error over $X:=\mathbb{R}^2$}
    \label{fig:err_cont}
\end{figure}
The phase portraits of the original system in Eq. \ref{eq:sim_dyn} are shown in Fig. \ref{fig:phase_plot}. A first integral of the unknown system is
\begin{equation}
\label{eq:1st_int}
    \Phi(x):= (x_2 + x_1^2 - 0.5)e^{2x_2} = c.
\end{equation}
The stability of the system depends on the initial condition $(x_1(0),x_2(0))$. When the initial condition $x(0)$ is such that the constant $c<0$, the system is stable and the positive orbit $\Gamma^+(x(0))$ itself is the invariant manifold $\Omega:=\{x\in\mathbb{R}^2: \Phi(x) = c\}$. The manifold $\Omega$ is a smooth, one dimensional, regularly embedded submanifold in the phase space $\mathbb{R}^2$. In this example, we choose the trajectory for of which the constant $c=-0.1$ in Eq. \ref{eq:1st_int}. The samples $\Omega_n=\{\xi_i\}_{i=1}^N$ are taken uniformly along the manifold with respect to the intrinsic metric of the manifold $\Omega$. Although in practice, such sampling procedure cannot be accomplished without knowing the manifold \textit{a priori}, it is not difficult to picture that as $t\rightarrow\infty$, the set $\{x(t_i)\}_{i=1}^N$ gradually fills the manifold $\Omega$. The samples are used to construct the approximant RKHS $\Hn:=\spand{\Omega_n}$. The Sobolev-Mate\'rn kernel $\knl_\nu$ is used to induce the RKHS. The subscript $\nu$ denotes the order of the kernel. If $\nu>d/2$, then all the functions in the RKHS $H_{X}$ induced by $\knl_\nu$ over $X=\Rd$ also belong to every Sobolev space $W^{\tau,2}(\Rd)$ where $\tau>2\nu-d/2$ \cite{Fu}. The general expression of $\knl_\nu$ is defined using a Bessel function, but when $\nu=p+1/2,\,p\in\mathbb{N}$ the kernel has the following closed-form expressions 
\begin{align*}
    \knl_{3/2}(x,y) &= \left(1 + \frac{\sqrt{3}r}{l}\right)\exp{\left(-\frac{\sqrt{3}r}{l}\right)}, \\
    \knl_{5/2}(x,y) &= \left(1 + \frac{\sqrt{5}r}{l} + \frac{5r^2}{3l^2}\right)\exp{\left(-\frac{\sqrt{5}r}{l}\right)},
\end{align*}
where $r=\|x-y\|$, and $l$ is the scaling factor of length \cite{rasm}.

Fig. $\ref{fig:err_cont}$ shows the contour of the estimation error $|f(x)-\hat{f}_n(x)|$ in $\mathbb{R}^d$ using when $N=100$ and $\nu=5/2$. The result is as expected. The estimate of error in the unknown function is close to zero along the manifold $\Omega = \{x\in\mathbb{R}^2:\Phi(x) = -0.1\}$. The rate of convergence with respect to the number of samples $N$ is shown in Fig. \ref{fig:rate_32}-\ref{fig:rate_52}. Note that the manifold $\Omega$ is a closed curve, and the samples are taken uniformly in metric. As a result, the fill distance $h_{\Omega_n,\Omega}\sim N^{-1}$. With this in mind, by Corollary \ref{cor:sobolevrate} we have the following relationship
\begin{equation*}
    \|\Restr{\Omega}(\hat{f}(t) - \hat{f}_n(t))\|\wsub{\mu}\sim N^{-(s-\mu)}.
\end{equation*}
In this example, the set $\Omega$ is PE, so $\hat{f}(t)\rightarrow f$ over $\Omega$. The RKHS $H_X\hookrightarrow W^{\tau,2}(\mathbb{R}^2)$ where $\tau<2\nu-1$. Thus the space of restrictions $\Restr{\Omega}H_X\hookrightarrow W^{\tau-0.5,2}(\Omega)$, and $s\leq\tau-0.5<2\nu-1.5$. On the other hand, we must have $\mu\in[k/2,\lceil s\rceil-1]$ so that $W^{s,2}(\Omega)\hookrightarrow W^{\mu,2}(\Omega)\hookrightarrow C(\Omega)$. In this way, we have
\begin{equation*}
    \|f - \hat{f}_n(t)\|_{C(\Omega)} = \sup_{x\in\Omega}|f(x)-(\hat{f}_n(t))(x)|\sim N^{-(s-\mu)}.
\end{equation*}
From the analysis above, we obtain the rates of convergence for the $C(\Omega)$-norm. When $\nu=3/2$, the order $s-\mu\geq 1$. When $\nu=5/2$, the order $s-\mu\geq 2$. Taking the logarithms for both sides of the equation above, the values calculated above are the worst case of slope bounds in Fig. \ref{fig:rate_32}-\ref{fig:rate_52}. In both figures, the actual error curves are below the slope bounds, which validates the conclusions in Corollary \ref{cor:sobolevrate}. One assumption for the Theorem \ref{th:many0} to hold is that $h_{\Omega_n,\Omega}$ must be smaller than a threshold. This assumption may explain the flat error curve when $N\leq 30$.
\begin{figure}
    \centering
    \includegraphics[width=\linewidth]{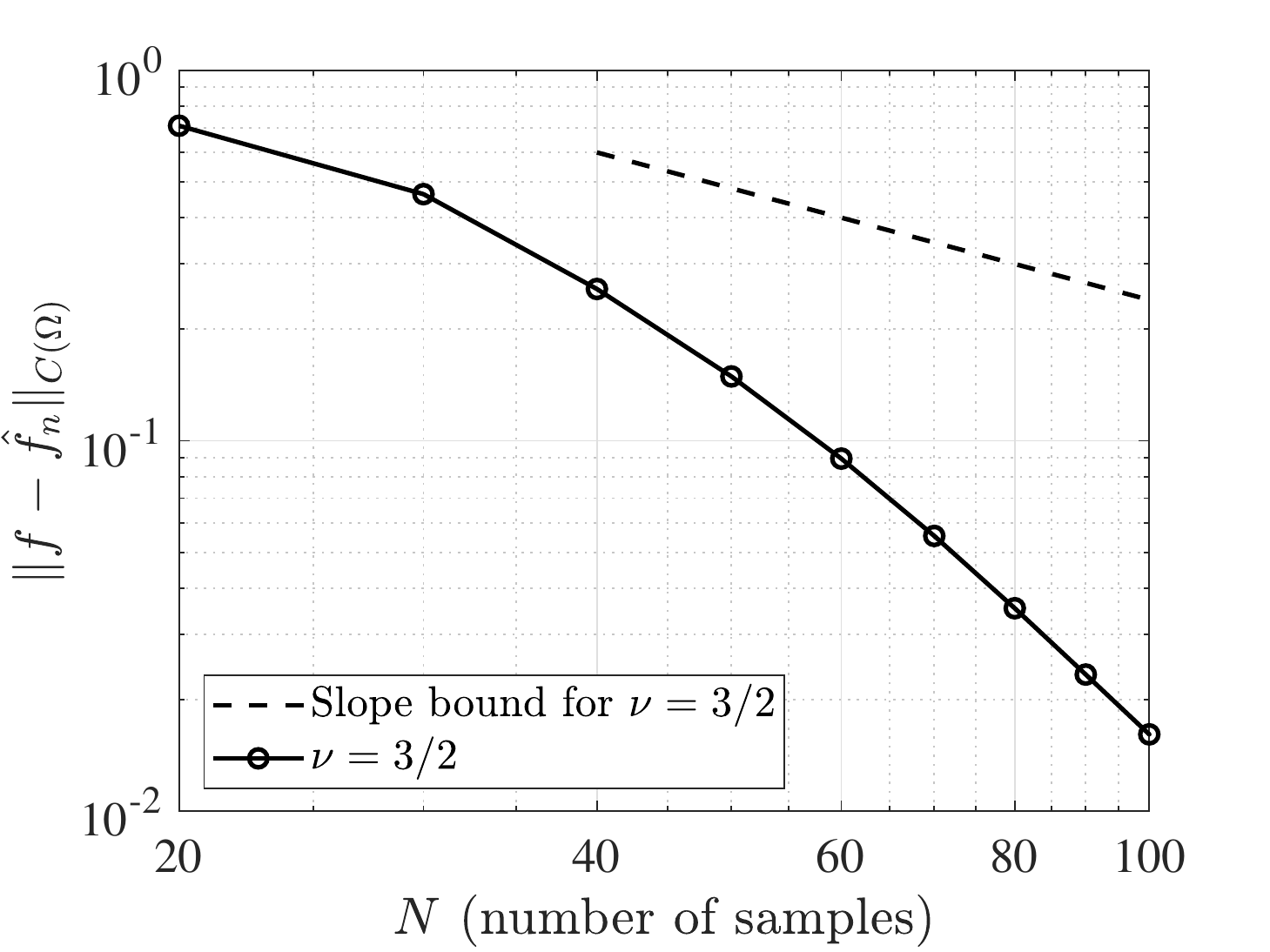}
    \caption{Rate of convergence for $\nu=3/2$.}
    \label{fig:rate_32}
\end{figure}
\begin{figure}
    \centering
    \includegraphics[width=\linewidth]{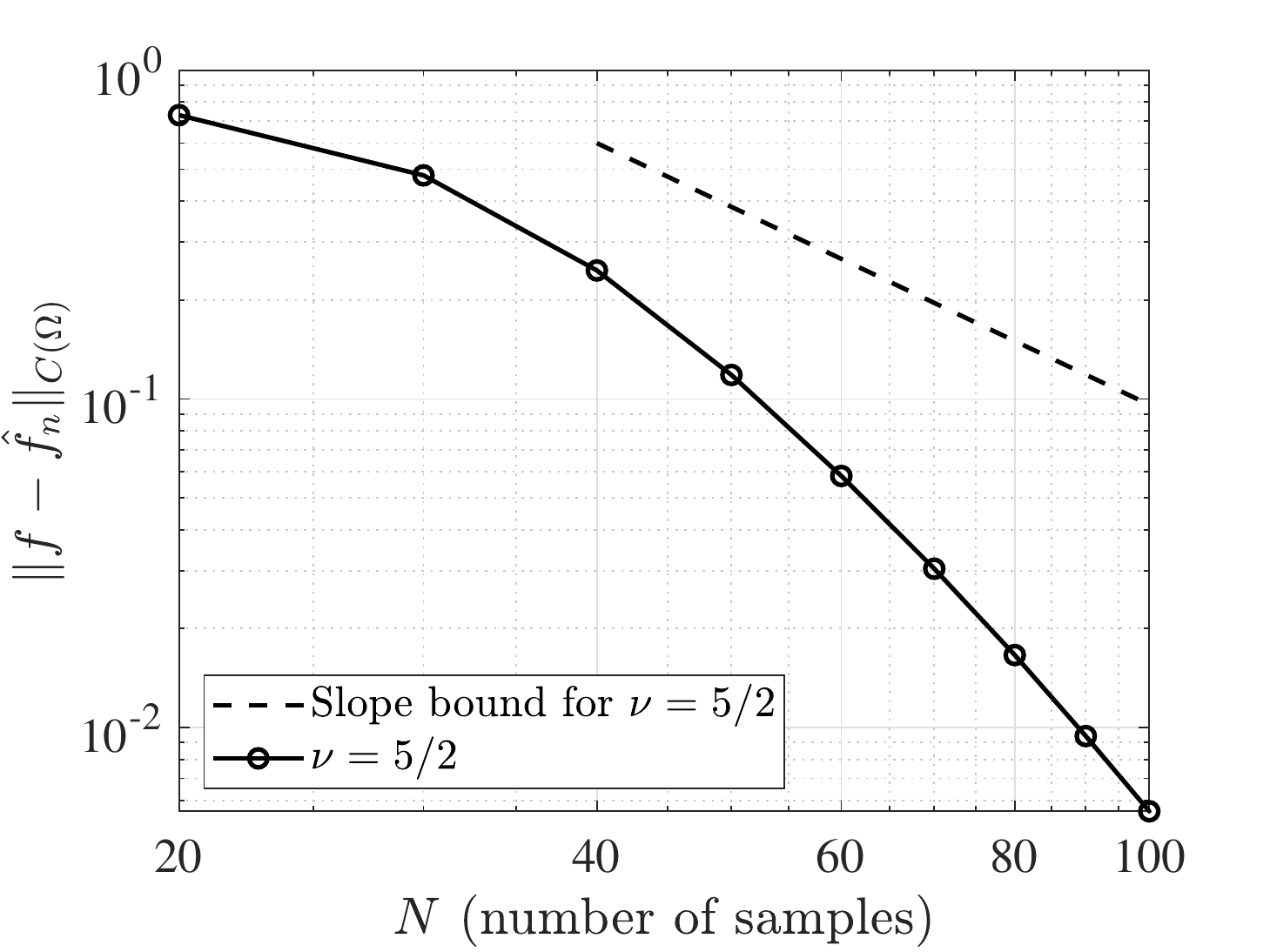}
    \caption{Rate of convergence for $\nu=5/2$.}
    \label{fig:rate_52}
\end{figure}

%
%
\section{Conclusions}
The RKHS embedding method constructs estimates of the unknown or uncertain functions that appear in types of ODEs in an infinite dimensional RKHS. This paper considers the practical problem of formulating finite dimensional approximations for this technique. The convergence of approximations is proven, and the rates of convergence are derived. By selecting the reproducing kernel that has algebraic decaying Fourier transform, the induced RKHS is embedded in or equivalent to a standard Sobolev space. The error equation of approximation is recast in the Sobolev space, and bounds on the error of interpolation in Sobolev spaces are applied to analyse the error of approximation. When the trajectory of the unknown system concentrates in a compact, regularly embedded submanifold of the state space, the rate of convergence for finite dimensional approximation is derived in terms of the fill distance of the samples. It is shown that as the samples becomes increasingly dense in the submanifold, the approximation error decays accordingly.

%
%
\bibliographystyle{IEEEtran}
\bibliography{rkhs_approx}

\end{document}